\documentclass[12pt]{article}
\usepackage[utf8]{inputenc}
\usepackage[T1]{fontenc}
\usepackage{amsmath, amssymb, amsthm}
\usepackage{mathrsfs}
\usepackage{hyperref}

\title{Potential Relation Between the Riemann Zeta Function and the Polynomial Function $F$ of the Generalized Erdős--Straus Conjecture, Subject to its Analytic Continuation.}

\author{Philemon Urbain Mballa \\[2ex]
\href{mailto:philemon-urbain.mballa@etu.u-paris.fr}{philemon-urbain.mballa@etu.u-paris.fr} \\
\href{mailto:philemonmballa@gmail.com}{philemonmballa@gmail.com}}

\date{\today}

\newtheorem{theorem}{Théorème}
\newtheorem{lemma}{Lemme}
\newtheorem{corollary}{Corollaire}
\newtheorem{definition}{Définition}
\newtheorem{remark}{Remarque}

\begin{document}

\maketitle

\noindent\textbf{Note:} This paper does not claim to provide a proof of the Riemann Hypothesis, nor does it attempt to address the Riemann Hypothesis itself. It merely explores a formal connection between the generalized Erdős--Straus conjecture and the Riemann zeta function, under certain conditions that would require further investigation\nobreak\ by specialists in complex analysis.

\begin{abstract}
In this article, we explore a natural extension of the quadratic parametrization introduced in our previous work. 
By replacing the integer $n$ by $n^s$ ($ s\in\mathbb{R},  s>1$) and allowing the parameters to be real, we obtain for 
each $n\ge 1$ a decomposition $\frac{k}{n^s} = \frac{1}{x_s(n)}+\frac{1}{y_s(n)}+\frac{1}{z_s(n)}$ with 
$x_s(n), y_s(n), z_s(n) \in \mathbb{R}^*+$. Summing this equality over all integers brings forth 
the Riemann zeta function. Subject to an analytic continuation of the quantities $x_s(n), y_s(n), z_s(n)$ 
to complex values of $s$, one would obtain a new function \(G_k(s)\) satisfying $G_k(s)=k\,\zeta(s)$, 
thus establishing a deep connection between the structure of the conjecture and the zeros of $\zeta$.
\end{abstract}

\section{Introduction}

The Erdős--Straus conjecture, formulated in 1948 by Paul Erdős and Ernst G. Strauss, states that for every integer \(n\ge 2\), there exist positive integers \(x,y,z\) such that
\[
\frac{4}{n} = \frac{1}{x} + \frac{1}{y} + \frac{1}{z}. \tag{1}
\]
Despite numerous works, this conjecture remains open to this day. For a detailed history of the numerical verifications that have progressively pushed the bound up to \(10^{17}\), we refer the reader to Tao's article \cite{1}.

The Polish mathematician Wacław Sierpiński naturally extended this question by replacing the numerator \(4\) with \(5\):
\[
\frac{5}{n} = \frac{1}{x} + \frac{1}{y} + \frac{1}{z}. \tag{2}
\]
This conjecture, known as the Sierpiński conjecture, has also been the subject of extensive investigations; a comprehensive historical account of numerical verifications for \(k=5\) can be found in \cite{2}.

A further generalization, often attributed to Andrzej Schinzel (a student of Sierpiński), consists in considering for any fixed integer \(k\ge 4\) the equation
\[
\frac{k}{n} = \frac{1}{x} + \frac{1}{y} + \frac{1}{z}, \tag{3}
\]
which is conjectured to hold for all natural numbers \(n\), except possibly finitely many exceptions depending on \(k\). 

In \cite{3}, we proposed a unified approach to all these conjectures, which was further developed in \cite{4}, where we introduced the function
\[
F_{x,t}^{(k)}(n) = t^2(kx-n)^2 - 2nxt
\]
and established the fundamental equivalence:
\[
\frac{k}{n} = \frac{1}{x} + \frac{1}{y} + \frac{1}{z} \;\Longleftrightarrow\; \exists x,t\in\mathbb{N}^*,\; F_{x,t}^{(k)}(n) \text{ is a perfect square } m^2,\; m\in\mathbb{N},
\]
with then \(y = t(kx-n) - m,\; z = t(kx-n) + m\).

For a fixed pair \((x,t)\), we define its \textbf{admissible domain}:
\[
\mathcal{D}_{x,t}^{(k)} = \left\{ n\in\mathbb{N},\ n\ge N_1\ge 2 \;\middle|\; n < kx \ \text{and}\ t \ge \frac{2nx}{(kx-n)^2} \right\}.
\]
The condition \(n < kx\) ensures \(x > n/k\) (necessary condition in the equivalence and for the decrease of \(F\)), while the second guarantees the positivity of \(F\).

The study of \(F\) has revealed remarkable properties, valid for \textbf{each fixed pair \((x,t)\) on its admissible domain}:
\begin{itemize}
    \item \textbf{Strict decrease}: For all \(n_1,n_2\in\mathcal{D}_{x,t}^{(k)}\) with \(n_1>n_2\), we have \(F(n_1) < F(n_2)\).
    \item \textbf{Positivity}: \(F(n) \ge 0\) for all \(n\in\mathcal{D}_{x,t}^{(k)}\).
    \item \textbf{Convergence}: Since the sequence \((F(n))_{n\in\mathcal{D}_{x,t}^{(k)}}\) is decreasing and bounded below, it attains its minimum at the last element of the domain. In particular, if \(F(n_0)=0\) for some \(n_0\in\mathcal{D}_{x,t}^{(k)}\), then \(n_0\) is necessarily the upper bound of \(\mathcal{D}_{x,t}^{(k)}\).
\end{itemize}
These properties, although local (specific to each pair), are essential for what follows. 
But one fact particularly struck us: \textbf{\(F\) vanishes for a significant proportion of integers \(n\)} 
(In the classical case \(k=4\) of the Erdős--Straus conjecture, one can show that the proportion of integers \(n\) admitting a pair \((x,t)\) with \(F(n)=0\) tends to \(1\) as \(n\to\infty\); for the proof, see \cite{4}). 
These zeros of \(F\) provide particularly simple solutions of the equation, with \(y=z\).

Moreover, \textbf{all} pairs \((x,t)\) yielding \(F(n)=m^2\) produce the integers
\[
y = t(kx-n) - m,\qquad z = t(kx-n) + m,
\]
which are precisely the roots of the quadratic equation:
\[
V^2 - 2t(kx-n)V + 2nxt = 0.
\]
This observation led us to a natural question: \textbf{Does \(F\) possess a deeper property, 
that of "generating" the zeros of certain important functions?} 
In particular, can one relate \(F\) to the Riemann zeta function, whose zeros lie at the heart of the Riemann hypothesis, one of the most important and complex problems in mathematics?

To explore this avenue, we extended the construction by replacing the integer \(n\) by \(n^s\) (\(s > 1\) (real)) 
and allowing the parameters to be real. each integer \(n \ge 1\), we then obtain a decomposition
\[
\frac{k}{n^s} = \frac{1}{x_s(n)} + \frac{1}{y_s(n)} + \frac{1}{z_s(n)},
\]
with \(x_s(n), y_s(n), z_s(n) \in \mathbb{R}^*+\). Summing this equality over all integers, 
the left-hand side brings forth the Riemann zeta function \(\zeta(s)\).

Whether this formal relation can lead to a deeper understanding of the zeta function is a question that we must leave to specialists. We merely wish to bring it to their attention. If one succeeds in analytically continuing the quantities 
\(x_s(n), y_s(n), z_s(n)\) to complex values of \(s\) — which would be natural if \(F\) itself 
admits an analytic continuation to \(\mathbb{C}\) — then one would obtain a new function \(G_k(s)\) satisfying
\(G_k(s) = k\,\zeta(s)\)
for all \(s\in\mathbb{C}\) except at \(s=1\), which is the simple pole of the Riemann zeta function. The zeros of \(G_k(s)\) would then coincide with those of \(\zeta\), 
establishing a bridge between the structure of \(F\) and the Riemann hypothesis.

This article aims to present this construction and to raise the questions it opens, 
in the hope of attracting the interest of specialists for an in-depth study of \(F\) and its analytic continuation — a potential key to unlocking the mysteries of the zeros of \(\zeta\).
\section{Extension to $n^s$ and real parametrization}

Let now \(k\ge 2\) be a fixed integer, \(x>0\), \(t>0\) real numbers, and \(s>1\) a real number. For each integer \(n\ge 1\), consider
\[
F_{x,t}^{(k)}(n^s) = t^2(kx - n^s)^2 - 2n^s x t.
\]

\begin{definition}
We define \(F_{x,t}^{(k)}(n^s)\) to be a \textit{real perfect square} if there exists \(m_s(n) \ge 0\) such that
\[
F_{x,t}^{(k)}(n^s) = m_s(n)^2.
\]
\end{definition}

The existence of such \(x,t,m_s(n)\) for each \(n\) is guaranteed by solving this equation in \(\mathbb{R}\) by choosing the parameters \(x\) and \(t\) such that the associated discriminant is non-negative to obtain real solutions (for instance, by taking \(m_s(n)=0\), the discriminant in \(t\) is always positive and this equation will have a double root \(t\) for any chosen \(x\)). Therefore, in the real setting, we have no difficulty in making \(F\) a real perfect square — this is always guaranteed.

In this case, we set
\[
y_s(n) = t(kx - n^s) - m_s(n), \qquad z_s(n) = t(kx - n^s) + m_s(n).
\]
\begin{lemma}
For every \(n\ge 1\) and every \(s>1\), if \(F_{x,t}^{(k)}(n^s) = m_s(n)^2\), then:
\[
\frac{k}{n^s} = \frac{1}{x} + \frac{1}{y_s(n)} + \frac{1}{z_s(n)}.
\]
\end{lemma}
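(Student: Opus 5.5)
The plan is to treat $N = n^s$ as a single positive real number and check the identity by Vieta's formulas, as in the integer case recalled in the Introduction. Set
\[
A = t(kx - N), \qquad m = m_s(n),
\]
so that $y_s(n) = A - m$ and $z_s(n) = A + m$. The hypothesis $F_{x,t}^{(k)}(N) = m^2$ reads $m^2 = A^2 - 2Nxt$.

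First I compute the elementary symmetric functions of $y_s(n)$ and $z_s(n)$:
\[
y_s(n) + z_s(n) = 2A = 2t(kx - N), \qquad y_s(n)\,z_s(n) = A^2 - m^2 = 2Nxt.
\]
So $y_s(n)$ and $z_s(n)$ are the two real roots of $V^2 - 2t(kx-N)V + 2Nxt = 0$, exactly as in the integer setting.

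The one point needing care is that the reciprocals make sense. Since $N = n^s > 0$, $x > 0$ and $t > 0$, we have $y_s(n)z_s(n) = 2Nxt > 0$. Hence neither $y_s(n)$ nor $z_s(n)$ vanishes, and they share the same sign. Positivity of both is not needed for the identity itself. If one wants it, it follows from $y_s(n)+z_s(n) = 2t(kx-N) > 0$, i.e. when $N < kx$, which is the real analogue of the admissible-domain condition.

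With $y_s(n)z_s(n) \neq 0$ established, I combine the fractions:
\[
\frac{1}{y_s(n)} + \frac{1}{z_s(n)} = \frac{y_s(n)+z_s(n)}{y_s(n)z_s(n)} = \frac{2t(kx-N)}{2Nxt} = \frac{kx - N}{Nx} = \frac{k}{N} - \frac{1}{x}.
\]
Adding $1/x$ to both sides and substituting $N = n^s$ gives the claimed decomposition. The condition $s > 1$ plays no role here; it only matters later, for summing over $n$.

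I expect no genuine obstacle. The only step requiring any thought is the nonvanishing of $y_s(n)$ and $z_s(n)$, which the product formula settles immediately.
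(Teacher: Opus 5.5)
Your proposal is correct and follows essentially the same route as the paper: compute $y_s(n)+z_s(n)=2t(kx-n^s)$ and $y_s(n)z_s(n)=2n^s xt$, then combine the reciprocals. Your extra remark that $y_s(n)z_s(n)=2n^s xt>0$ ensures $1/y_s(n)$ and $1/z_s(n)$ are defined is a detail the paper leaves implicit.
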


\begin{proof}
A direct computation gives \(y_s(n)+z_s(n) = 2t(kx-n^s)\) and \(y_s(n)z_s(n) = 2n^s x t\), whence:
\[
\frac{1}{y_s(n)}+\frac{1}{z_s(n)} = \frac{y_s(n)+z_s(n)}{y_s(n)z_s(n)} = \frac{kx-n^s}{n^s x} = \frac{k}{n^s} - \frac{1}{x}.
\]
\end{proof}

Thus, for each \(n\), the real numbers \(x, y_s(n), z_s(n)\) provide a decomposition of \(\frac{k}{n^s}\) as a sum of three fractions each with numerator 1 (the numerators are not necessarily integers).

\section{Summation over all integers and appearance of \(\zeta(s)\)}

Summing the previous equality over all integers \(n\ge 1\), we formally obtain:
\[
\sum_{n=1}^{\infty} \frac{k}{n^s} = \sum_{n=1}^{\infty} \left( \frac{1}{x} + \frac{1}{y_s(n)} + \frac{1}{z_s(n)} \right).
\]

The left-hand side is \(k\,\zeta(s)\), where \(\zeta(s)\) is the Riemann zeta function, defined for all real \(s\) with \(s > 1\) (in the real setting).
\begin{definition}
We formally define the function \(G_k(s)\) (associated with \(k\)) by:
\[
G_k(s) = \sum_{n=1}^{\infty} \left( \frac{1}{x_s(n)} + \frac{1}{y_s(n)} + \frac{1}{z_s(n)} \right)
\]
for \(s > 1\), where \(x_s(n), y_s(n), z_s(n)\) are the positive real numbers arising from the parametrization satisfying 
\(\frac{k}{n^s} = \frac{1}{x_s(n)}+\frac{1}{y_s(n)}+\frac{1}{z_s(n)}\).
\end{definition}

\begin{theorem}
For every \(s\) with \(s > 1\), we formally have:

\[
G_k(s) = k\,\zeta(s).
\]

\end{theorem}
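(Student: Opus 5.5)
The plan is to reduce the theorem to a termwise identity and then justify summing it; the convergence of the right-hand series comes for free from that of $\zeta(s)$. Everything needed is already in Lemma 1 together with the definition of $G_k(s)$.

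\textbf{Step 1: the termwise identity.} For each $n\ge 1$ and the fixed real $s>1$, choose parameters $x_s(n)>0$ and $t_s(n)>0$ so that $F_{x_s(n),t_s(n)}^{(k)}(n^s)$ is a real perfect square $m_s(n)^2$. I will allow the parameters to depend on $n$. To keep $y_s(n)$ and $z_s(n)$ positive I will take $m_s(n)=0$ and impose $kx_s(n)>n^s$, for instance $x_s(n)=2n^s/k$. The equation $t^2(kx-n^s)^2=2n^sxt$ then has the positive root
\[
t_s(n)=\frac{2n^s x_s(n)}{(kx_s(n)-n^s)^2}.
\]
With this choice $y_s(n)=z_s(n)=t_s(n)(kx_s(n)-n^s)>0$. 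Lemma 1, applied with $x=x_s(n)$ and $t=t_s(n)$, gives
\[
\frac{k}{n^s}=\frac{1}{x_s(n)}+\frac{1}{y_s(n)}+\frac{1}{z_s(n)}
\]
for every $n$.

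\textbf{Step 2: summation.} Write $a_n$ for the $n$-th term of $G_k(s)$, that is, the right-hand side above. By Step 1, $a_n=k/n^s$ exactly. The series $\sum_n k/n^s$ has nonnegative terms and converges to $k\,\zeta(s)$ because $s>1$. The sequence of partial sums of $G_k(s)$ therefore coincides term by term with that of $k\zeta(s)$, so the series defining $G_k(s)$ converges, absolutely and unconditionally, and its sum is $k\,\zeta(s)$. The word ``formally'' in the statement then becomes unnecessary for real $s>1$: no rearrangement or interchange of limits is involved.

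\textbf{Main obstacle.} The only delicate point is that the definition of $G_k(s)$ must be unambiguous and must use positive parameters. The theorem should be read as holding for any admissible choice of $(x_s(n),t_s(n),m_s(n))$. The proof above is independent of that choice, since every admissible choice yields the same termwise identity by Lemma 1.

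Two pitfalls have to be avoided. First, if a single $x$ independent of $n$ is used, as in the displayed sum of Section 3, then $\sum_n 1/x$ diverges. In that case the identity cannot be summed as written, and one must let $x$ depend on $n$. Second, positivity requires $kx>n^s$, so the parameters cannot be uniform in $n$ in any case. Once these dependencies are made explicit, the argument is a direct summation.
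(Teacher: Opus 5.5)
Your proof is correct and follows the paper's route: apply Lemma 1 for each $n$ separately, so that every term of $G_k(s)$ equals exactly $k/n^s$, and then sum over $n$ to get $k\,\zeta(s)$. Your explicit choice $x_s(n)=2n^s/k$, $m_s(n)=0$, $t_s(n)=4/k$ gives $y_s(n)=z_s(n)=4n^s/k>0$. Together with your remark that $x$ must depend on $n$ (otherwise $\sum 1/x$ diverges, as in the display of Section 3), this makes rigorous what the paper only asserts formally.
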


\section{Analytic continuation and consequences (conditional)}

The crucial question now is whether the functions \(x_s(n), y_s(n), z_s(n)\) admit an analytic continuation to complex values of \(s\). If this is the case, then \(G_k(s)\) would inherit this continuation and become a meromorphic function on \(\mathbb{C}\).

\begin{corollary}[Conditional]
If the quantities \(x_s(n), y_s(n), z_s(n)\) extend analytically to \(s\in\mathbb{C}\), then for all \(s\) (except at poles),

\[
G_k(s) = k\,\zeta(s).
\]

Consequently, the zeros of \(G_k(s)\) would coincide with those of \(\zeta\), and in particular with the non-trivial zeros of the Riemann hypothesis.
\end{corollary}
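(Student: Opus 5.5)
The plan is to reduce everything to the identity theorem for meromorphic functions, and to be careful about what ``$G_k(s)$'' means outside the half-plane of convergence. First, on the real half-line $s>1$, the Lemma gives, for each fixed $n$, the termwise identity $\frac{1}{x_s(n)}+\frac{1}{y_s(n)}+\frac{1}{z_s(n)} = k\,n^{-s}$. All three terms are positive, and $\sum_n k\,n^{-s}$ converges absolutely. Hence the series defining $G_k(s)$ in the Definition converges absolutely for real $s>1$, and the Theorem holds there in the genuine, not merely formal, sense: $G_k(s)=k\,\zeta(s)$.

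Next I use the hypothesis of the Corollary. Assume that each $s\mapsto x_s(n),y_s(n),z_s(n)$ extends holomorphically to a connected open set $\Omega\subset\mathbb{C}$ containing $(1,\infty)$, and that none of them vanishes identically. Then for each $n$ the function $h_n(s)=\frac{1}{x_s(n)}+\frac{1}{y_s(n)}+\frac{1}{z_s(n)}-k\,n^{-s}$ is meromorphic on $\Omega$ and vanishes on $(1,\infty)$, a set with accumulation points. By the identity theorem, $h_n\equiv 0$ on $\Omega$. So the termwise relation persists for complex $s$, and in particular for $\operatorname{Re}s>1$ the series $G_k(s)$ equals $\sum_n k\,n^{-s}=k\,\zeta(s)$, converging locally uniformly. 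Thus $G_k$ is holomorphic there and coincides with $k\zeta$.

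The main obstacle is the region $\operatorname{Re}s\le 1$. There the termwise identity still holds, but $\sum_n k\,n^{-s}$ diverges, so the defining series of $G_k$ diverges as well. No choice of parametrization can repair this, because the terms are forced to equal $k\,n^{-s}$. I would therefore state explicitly that, outside $\operatorname{Re}s>1$, $G_k(s)$ must be understood as the analytic continuation of the function defined by the series, not as the series itself. With that convention the conclusion follows from uniqueness of analytic continuation. $G_k$ and $k\zeta$ agree on the open half-plane $\operatorname{Re}s>1$, and $k\zeta$ is meromorphic on the connected set $\mathbb{C}$ with a single simple pole at $s=1$. Hence $G_k$ extends, uniquely, to $k\zeta$ on $\mathbb{C}\setminus\{1\}$.

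Finally, since $k\neq 0$, the zeros of $G_k$ are exactly those of $\zeta$, trivial and non-trivial alike, with the same multiplicities. I would close by remarking that this argument shows the continuation of $G_k$ is entirely determined by $\zeta$. The extra hypothesis on $x_s(n),y_s(n),z_s(n)$ is needed only to make sense of the individual terms for complex $s$. It contributes no information about the location of the zeros beyond what $\zeta$ already carries.
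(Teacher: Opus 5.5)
Your proposal is correct and follows essentially the paper's route: you obtain \(G_k(s)=k\,\zeta(s)\) on \(\Re(s)>1\), extend it to \(\mathbb{C}\setminus\{1\}\) by uniqueness of analytic continuation, and conclude that the zeros coincide because \(k\neq 0\). Two of your steps make precise points that the paper's Remark only asserts: the termwise identity-theorem argument carrying the identity from real \(s>1\) to complex \(\Re(s)>1\), and the observation that the defining series diverges for \(\Re(s)\le 1\), so that \(G_k\) there can only mean the continuation of \(k\,\zeta\) rather than the series itself.
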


\section{Link with the quadratic equation}

Recall that \(y_s(n)\) and \(z_s(n)\) are the zeros of the quadratic equation:
\[
V^2 - 2t(kx - n^s)V + 2n^s x t = 0.
\]

When \(m_s(n) = 0\), we have \(y_s(n) = z_s(n) = t(kx-n^s)\). The equation then admits a double root. 
These particular cases correspond to the "zeros" of \(F\) and could play a special role in the study of \(G_k(s)\).

\begin{remark}
Suppose that the function \(F_{x,t}^{(k)}(n^s)\), considered as a function of the complex variable \(s\), admits an analytic continuation to \(\mathbb{C}\). This will be the case if we choose \(x_s(n)\) as a meromorphic function of \(s\) (for example constant) and if the condition \(F_{x,t}^{(k)}(n^s) = m_s(n)^2\) defines \(m_s(n)\) in such a way as to preserve analyticity. Then \(y_s(n)\) and \(z_s(n)\), given by the explicit formulas, also inherit this continuation.

The equality \(G_k(s) = k\zeta(s)\), established for \(\Re(s) > 1\) (hence in particular away from the pole \(s=1\)), extends by analytic continuation to the whole complex plane except for the pole \(s=1\). Thus, for all \(s \in \mathbb{C} \setminus \{1\}\), we have \(G_k(s) = k\zeta(s)\), and \(G_k(s)\) has a simple pole at \(s=1\) with residue \(k\). \(G_k(s)\) converges for \(\Re(s) > 1\) because it is termwise equal to \(k\,\zeta(s)\), which converges absolutely in this half-plane.

The functional equation of \(G_k(s)\) is then directly inherited from that of \(\zeta\):

\[
G_k(s) = k \cdot 2^s \pi^{s-1} \sin\left(\frac{\pi s}{2}\right) \Gamma(1-s) \, \zeta(1-s),
\]

or in symmetric form:
\[
\pi^{-s/2}\,\Gamma\!\left(\frac{s}{2}\right)G_k(s) = k \cdot \pi^{-(1-s)/2}\,\Gamma\!\left(\frac{1-s}{2}\right) \zeta(1-s).
\]

The fact that \(x_s(n), y_s(n), z_s(n)\) disappear from this functional equation might suggest, *a priori*, that their study is superfluous for understanding the zeros of \(\zeta\), but I do not believe this to be the case because these quantities are intimately linked to the structure of \(F\) and to the quadratic equation that they annihilate.

Recall that \(y_s(n)\) and \(z_s(n)\) are defined from the equality \(F_{x,t}^{(k)}(n^s) = m_s(n)^2\) (this equality always has solutions in \(\mathbb{R}\) or \(\mathbb{C}\) as we have seen previously) by:
\[
y_s(n) = t(kx - n^s) - m_s(n), \qquad z_s(n) = t(kx - n^s) + m_s(n).
\]
A direct computation using Viète's formulas shows that they are precisely the zeros of the quadratic equation in \(V\):
\[
V^2 - 2t(kx - n^s)V + 2n^s x t = 0,
\]
which is none other than the equation \(V^2 - (y_s(n)+z_s(n))V + y_s(n)z_s(n) = 0\). Thus, **\(y_s(n)\) and \(z_s(n)\) are the zeros of this quadratic equation**.

The crucial point is that **the \(s\) which appears in this quadratic equation not only participates in the vanishing of this latter quadratic equation in \(V\), but also controls the vanishing of \(F(n^s)\) (in particular when \(m_s(n)=0\) giving \(y_s(n)=z_s(n)\)), and is exactly the same \(s\) that appears in the functional equation of \(\zeta\) and, above all, the same \(s\) for which we seek to prove that the non-trivial zeros of \(\zeta\) lie on the critical line \(\Re(s) = 1/2\).**

Consequently, any property of the quadratic equation in \(V\) — and hence of \(F(n^s)\) — **transfers directly** to \(\zeta(s)\) via the relation \(G_k(s) = k\zeta(s)\). The study of the behavior of the zeros \(y_s(n), z_s(n)\) as functions of \(s\) is not an abstract exercise: it is potentially a window into the very nature of the zeros of \(\zeta\).

We observe moreover a striking analogy: **the roots \[
y_s(n) = t(kx - n^s) - m_s(n), \quad \text{and} \quad z_s(n) = t(kx - n^s) + m_s(n)
\] are symmetric with respect to \(t(kx-n^s)\)**, just as the non-trivial zeros of \(\zeta\) are symmetric with respect to the critical line \(\Re(s) = 1/2\). Is this similarity a mere coincidence? Only an in-depth study of \(F\) and the associated quadratic equation may perhaps answer this fascinating question — this is beyond my current expertise, so I cannot answer it.

In summary, far from being redundant, the analysis of \(F(n^s)\) and the roots \(y_s(n), z_s(n)\) could potentially (if confirmed by specialists) constitute a **new approach** to addressing the Riemann hypothesis, with every piece of information about these quantities reflecting back on \(\zeta\) through the parameter \(s\) that unites them.
\end{remark}

\section{Perspectives and open questions}

The highlighting of the central role of the parameter \(s\) — which appears simultaneously in \(\zeta(s)\), in the quadratic equation satisfied by \(y_s(n)\) and \(z_s(n)\), and in the function \(F(n^s)\) — profoundly transforms the questions that can be asked. Henceforth, any property of \(F\) or of the quadratic equation in \(V\) directly reflects on \(\zeta\) via the relation \(G_k(s) = k\zeta(s)\). The following perspectives naturally arise:

\begin{enumerate}
    \item \textbf{Analysis of \(F\) as a function of \(s\)}: 
    Study the behavior of \(F_{x,t}^{(k)}(n^s)\) as a meromorphic function of \(s\). In particular, what can be said about the values of \(s\) for which \(F(n^s) = 0\)? These "zeros of \(F\)" correspond to the cases where \(y_s(n) = z_s(n)\), and could be related to the zeros of \(\zeta\).

    \item \textbf{Quadratic equation and symmetry}: 
    The zeros \(y_s(n)\) and \(z_s(n)\) of the equation \(V^2 - 2t(kx - n^s)V + 2n^s x t = 0\) are symmetric with respect to \(t(kx-n^s)\). This symmetry recalls that of the non-trivial zeros of \(\zeta\) with respect to the critical line \(\Re(s) = 1/2\). Is this a mere coincidence or a deep structural clue? A systematic study of this analogy could shed light on the location of the zeros.

    \item \textbf{Choice of \(x_s(n)\) as a free parameter}: 
    The freedom to choose \(x_s(n)\) (a meromorphic function of \(s\)) opens the possibility of constructing various representations of \(\zeta(s)\). Can one choose \(x_s(n)\) so that the series \(\sum 1/y_s(n)\) and \(\sum 1/z_s(n)\) or \(\sum(1/y_s(n) + 1/z_s(n))\) have faster convergence properties or simpler analytic continuations? Such optimization could facilitate the numerical or theoretical study of the zeros.

    \item \textbf{Transfer of properties from \(F\) to \(\zeta\)}: 
    Since \(G_k(s) = k\zeta(s)\), any information about \(F(n^s)\) (regularity, functional equation, location of zeros) translates into information about \(\zeta\). Can one, for example, deduce the functional equation of \(\zeta\) from that of \(F\)? Such an alternative demonstration would be of great interest.

    \item \textbf{Generalizations}: 
    For any integer \(k \ge 2\), the construction applies and yields a representation of \(k\,\zeta(s)\). In the case \(k=4\), we recover the classical Erdős--Straus conjecture. For \(k\ge 5\), we obtain a family of functions \(G_k\) associated with the generalized Sierpiński conjectures. A comparative study of these functions could reveal universal properties.

    \item \textbf{Numerical approach}: 
    Numerical exploration of \(F(n^s)\) for complex values of \(s\) (in particular on the critical line \(\Re(s)=1/2\)) could provide clues about the behavior of the roots \(y_s(n), z_s(n)\) and their link with the zeros of \(\zeta\). Such an investigation, although heavy, is conceivable with current computational means.
\end{enumerate}

Ultimately, the observation that **the same \(s\)** connects \(F\), the quadratic equation, and \(\zeta\) makes the in-depth study of \(F\) a research program in its own right, likely to shed new light on the Riemann hypothesis and its mysteries.

\section{Conclusion}

We have proposed a formal connection between the Erdős--Straus conjecture and the Riemann zeta function. By extending the parametrization to the real framework and summing over all integers, we obtain a relation \(G_k(s) = k\,\zeta(s)\) for real \(s>1\). The possibility of analytically continuing this relation to the entire complex plane, if confirmed, would place the structure of the conjecture at the heart of the analysis of the zeros of \(\zeta\). An in-depth investigation by specialists in complex analysis is now necessary to explore this fascinating avenue.

\end{document}